\newcommand{\comment}[1]{}
\newcommand{\raisedot}{\raisebox{2pt}{$.$}}
\newcommand{\Dbar}{{\mathcal R}}
\newcommand{\Tabcompare}{{Table~1}}	% Hardcode table number as automatic
\newcommand{\R}{{\mathbb R}}
\newcommand{\Z}{{\mathbb Z}}
\newtheorem{theorem}{Theorem}
\newtheorem{corollary}[theorem]{Corollary} % Changed in v10
\newtheorem{lemma}[theorem]{Lemma}	   % ditto
\newtheorem{definition}[theorem]{Definition}
\newtheorem{remark}[theorem]{Remark}
\begin{document}
\bibliographystyle{plain}
\title{~\\[-40pt] 
General lower bounds on maximal determinants of binary matrices}
\author{Richard P. Brent\\
Australian National University\\
Canberra, ACT 0200,
Australia\\
\href{mailto:maxdet@rpbrent.com}{\tt maxdet@rpbrent.com}
\and
Judy-anne H. Osborn\\
The University of Newcastle\\
Callaghan, NSW 2308,
Australia\\
\href{mailto:Judy-anne.Osborn@newcastle.edu.au}%
{\tt Judy-anne.Osborn@newcastle.edu.au}
}

\date{\small In memory of Warwick Richard de Launey 1958--2010}

\maketitle
\thispagestyle{empty}                   % To avoid page number

\begin{abstract}
We prove general
lower bounds on the maximal determinant of
\hbox{$n \times n$} $\{+1,-1\}$-matrices,
both with and without the assumption of the Hadamard conjecture.
Our bounds improve on earlier results of de Launey and Levin (2010)
and, for certain congruence classes of $n \bmod 4$, the results of
Koukouvinos, Mitrouli and Seberry (2000).  
In an Appendix we give a new proof, using Jacobi's determinant identity,
of a result of Sz\"oll\H{o}si (2010) on minors of Hadamard matrices.
\end{abstract}

\section{Introduction}		\label{sec:intro}

For $n \ge 1$, let
$D(n)$ denote the maximum determinant attainable by an $n \times n$
$\{+1,-1\}$-matrix.  
There are several well-known {upper} bounds on $D(n)$,
such as Hadamard's original bound~\cite{Hadamard}
$D(n) \le n^{n/2}$, which applies for all positive integers $n$, and bounds due to
Ehlich~\cite{Ehlich64a,Ehlich64b}, 
Barba~\cite{Barba33}, and
Wojtas~\cite{Wojtas64}, which are stronger but apply only to
certain congruence classes of $n \bmod 4$. 

In this paper we give new lower bounds on $D(n)$, improving
in certain cases on earlier
results of Cohn~\cite{Cohn63}, Clements and Lindstr\"om~\cite{CL65},
Koukouvinos, Mitrouli and Seberry~\cite{KMS00},
and de Launey and Levin~\cite{LL}.

Since $D(n)$ is a rapidly increasing function of $n$, it is
convenient to normalize by the Hadamard bound.
Thus, we define $\Dbar(n) := D(n)/n^{n/2}$ and express our bounds in
terms of $\Dbar(n)$.  Hadamard's inequality
becomes $\Dbar(n) \le 1$.

We consider square $\{+1,-1\}$-matrices. The \emph{order} is the number
of rows (or columns) of such a matrix.
A $\{+1,-1\}$-matrix $H$ with $|\det H| = n^{n/2}$ is called a 
\emph{Hadamard matrix}. A Hadamard matrix has order $1$, $2$, or a multiple
of $4$; the \emph{Hadamard conjecture} is that every positive multiple 
of~$4$ is the order of a Hadamard matrix. It is known from~\cite{KT} 
that every positive multiple of $4$ up to and including $664$ is 
the % added in v3, RPB 20120818
order of a Hadamard matrix.

Our technique for obtaining lower bounds on $D(n)$ is to consider a Hada\-mard
matrix $H$ of order % say 
$h$ as close as possible to $n$. If $h > n$ we 
consider minors of order $n$ in $H$, much as was done by de Launey and
Levin~\cite{LL}, although the details differ as we use a theorem of
Sz\"oll\H{o}si~\cite{Szollosi10} instead of the probabilistic
approach of~\cite{LL}.
If $h < n$ we construct a matrix of
order $n$ with large determinant having $H$ as a submatrix. By combining 
both ideas, we improve on the bounds that are attainable using either
idea separately.

The distance $\delta(n) = |h-n|$ of $n$ from the (closest) order $h$ of a
Hadamard matrix can be bounded by the \emph{prime gap} function $\lambda(x)$ which
bounds the maximum distance between successive primes $p_i, p_{i+1}$ with
$p_i \le x$. Thus, we can use bounds on $\lambda(x)$, such as the
theorem of Baker, Harman and Pintz~\cite{BHP}, to obtain unconditional lower
bounds on $D(n)$. Unfortunately, the known bounds on $\lambda(x)$
are much weaker than what is conjectured to be true.

We give unconditional lower bounds on $D(n)$ and $\Dbar(n)$ in
\S\ref{sec:bounds}.
Theorem~\ref{thm:unconditional1}
implies that $\Dbar(n) \ge n^{-\delta(n)/2}$.

In \S\ref{sec:H_bounds} we give stronger lower bounds on the
assumption of the Hadamard conjecture.
Theorem~\ref{thm:conditional1} improves (for large~$n$) on
the bounds of Koukouvinos, Mitrouli and Seberry~\cite{KMS00}
in the cases $n \bmod 4 \in \{1,2\}$.

On the assumption of the Hadamard conjecture,
the relative gap between the (Hadamard) upper bound and the lower bound is
of order at most $n^{1/2}$.
More precisely, our Corollary~\ref{cor:improved}
gives $\Dbar(n) \ge (3n)^{-1/2}$.
This improves on the lower bound of de Launey and Levin~\cite{LL}, 
who obtained $\Dbar(n) \ge cn^{-3/2}$
for some constant~$c>0$.
A comparison of our bounds with earlier results
is given in \S\ref{sec:comparison} (see also Remark~\ref{remark:LL}
in \S\ref{sec:bounds}).

Our lower bound results are weaker than what is conjectured to be
true. Numerical evidence for $n \le 120$ supports a conjecture of Rokicki
\emph{et al}~\cite{Orrick-prog} that $\Dbar(n) \ge 1/2$.
In~\S\ref{sec:H_bounds} we come close to this conjecture (on the assumption of
the Hadamard conjecture) for five of the eight congruence classes
of $n \bmod 8$.

\subsubsection*{Notation}		%\label{sec:notation}

The positive integers are denoted by $\Z^{+}$, and the reals by $\R$.
The notations $f \ll g$ and $g \gg f$ mean the same as $f = O(g)$.

For $n\in\Z^{+}$, ${\cal H}_n$ denotes the
set of Hadamard matrices of order $n$,
and ${\cal H} := \{n\in\Z^{+}\;|\; {\cal H}_n \ne \emptyset\}$.
The elements of $\cal H$ in increasing order form the 
sequence $(n_i)_{i \ge 1}$ of all possible
orders of Hadamard matrices ($n_1 = 1$, $n_2 = 2$, $n_3=4$, $n_4 =
8$, $n_5 = 12, \ldots$). 
The distance of $n$ from a Hadamard order is
\begin{equation} \label{def:delta}
\delta(n) := \min_{h \in \cal H} |n-h|.
\end{equation}

\pagebreak[3]
The primes are denoted by $(p_i)_{i \ge 1}$ with $p_1 = 2, p_2 = 3$, etc.
The \emph{prime gap} function $\lambda:\R \to \Z$ is 
\[\lambda(x) := \max\;\{p_{i+1}-p_i\;|\; p_i \le x\} \cup \{0\}.\]
By analogy, we define the \emph{Hadamard gap} function $\gamma:\R\to\Z$ to be
\[\gamma(x) := \max\;\{n_{i+1}-n_i\;|\; n_i \le x\} \cup \{0\}.\]

Finally, $\beta_n$ denotes the well-known mapping from $\{+1,-1\}$-matrices of 
order $n > 1$ to $\{0,1\}$-matrices of order $n-1$,
such that \[|\det(A)| = 2^{n-1}|\det\beta_n(A)|.\]

\section{Unconditional lower bounds on 
 {\it D}({\it n})} % avoid hyperref warning by not using $D(n)$
 \label{sec:bounds}

The connection between the prime gap function $\lambda$
and the Hadamard gap function $\gamma$
is given by the following lemma.
\begin{lemma}	\label{lemma:gamma_ineq}
For $n \ge 8$, we have $\gamma(n) \le 2\lambda(n/2-1)$.
\end{lemma}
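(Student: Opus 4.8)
The plan is to exploit the classical fact that if $q$ is a prime power with $q \equiv 3 \pmod 4$, then a Hadamard matrix of order $q+1$ exists (Paley's first construction), and if $q \equiv 1 \pmod 4$ is a prime power, then a Hadamard matrix of order $2(q+1)$ exists (Paley's second construction). Since every odd prime $p$ is either $\equiv 1$ or $\equiv 3 \pmod 4$, each odd prime $p$ contributes a Hadamard order, namely $p+1$ if $p \equiv 3 \pmod 4$ and $2(p+1)$ if $p \equiv 1 \pmod 4$; in both cases the resulting Hadamard order lies in the interval $[p+1,\,2p+2]$. In particular, taking $m = n/2 - 1$, between consecutive primes $p_i \le m < p_{i+1}$ the Paley construction produces Hadamard orders that are not too far apart, and one can then bound $\gamma(n)$ by twice the prime gap at $m$.

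First I would fix $n \ge 8$ and consider any index $i$ with $n_i \le n$; the goal is to show $n_{i+1} - n_i \le 2\lambda(n/2 - 1)$, since $\gamma(n)$ is the maximum of such differences (and the degenerate value $0$ is harmless). Given a Hadamard order $n_i$, I want to exhibit another Hadamard order in the interval $(n_i,\, n_i + 2\lambda(n/2-1)]$. The key step is: let $p$ be the smallest prime with $p > n_i/2$ (equivalently, $p \ge \lceil (n_i+1)/2\rceil$), so that by the definition of $\lambda$ we have $p \le n_i/2 + \lambda(n/2 - 1)$ provided $n_i/2 - 1 \le n/2 - 1$, which holds since $n_i \le n$; one must also check $p$ is odd, i.e. $p \neq 2$, which is immediate for $n \ge 8$ since then $n_i/2 \ge 2$. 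Now Paley's construction at the prime $p$ yields a Hadamard order $h$ with $p+1 \le h \le 2p+2$. The arithmetic to verify is that $h > n_i$ (which follows from $h \ge p+1 > n_i/2 + 1$, combined with a short case analysis on the small orders $n_i \in \{1,2,4,8\}$ versus the asymptotic regime) and that $h \le n_i + 2\lambda(n/2 - 1)$ (which follows from $h \le 2p+2 \le n_i + 2\lambda(n/2 - 1) + 2$, so one needs to absorb the additive $+2$, perhaps by a slightly sharper choice of $p$ or by noting $\lambda$ of any real $\ge 1$ is at least $1$ so $2 \le 2\lambda(\cdot)$ is the wrong direction — instead one should choose $p$ to be the smallest prime $>n_i/2 - 1$ and track the gap more carefully).

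The main obstacle I anticipate is precisely this bookkeeping of additive constants: Paley's construction gives a Hadamard order in a window of width roughly $p$ around $p$, not pinned down exactly, so to land the clean bound $2\lambda(n/2-1)$ one must choose the prime $p$ optimally relative to $n_i$ (likely the largest prime $p \le n_i/2$ rather than the smallest $p > n_i/2$, so that $h \ge p+1$ and the next Hadamard order after $n_i$ is reached by the next prime, incurring exactly one prime gap, doubled) and handle the finitely many small cases $n < $ (some explicit bound, say $n \le 30$ or so) by direct inspection of the known sequence $(n_i)$: $1,2,4,8,12,16,20,24,28,32,\ldots$. For these small $n$ one verifies $\gamma(n) \le 2\lambda(n/2-1)$ by hand — e.g. at $n=8$, $\gamma(8) = n_4 - n_3 = 4$ and $2\lambda(3) = 2\cdot 2 = 4$, so equality holds and the hypothesis $n \ge 8$ is sharp. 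For large $n$ the Paley argument gives the bound with room to spare, so the only real work is pinning down the crossover and the base cases.
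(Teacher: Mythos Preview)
Your proposal has a genuine gap. The plan relies on the fact that an odd prime $p$ yields a Hadamard order $h \in \{p+1,\,2(p+1)\}$ depending on $p \bmod 4$, and then tries to place such an $h$ in the interval $(n_i,\, n_i + 2\lambda(n/2-1)]$ for a prime $p$ chosen near $n_i/2$. But the window $[p+1,\,2(p+1)]$ is too wide for this to work: if $p$ is the smallest prime exceeding $n_i/2$ and it happens that $p \equiv 3 \pmod 4$, then $h = p+1$ satisfies only $h > n_i/2 + 1$, which does \emph{not} give $h > n_i$ except when $n_i \le 2$. There is no ``asymptotic regime'' in which this improves, so the case analysis you allude to cannot close the argument. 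Conversely, choosing $p$ near $n_i-1$ so as to force $h \ge p+1 > n_i$ makes the upper bound $h \le 2p+2$ useless (of order $2n_i$). The additive bookkeeping you flag is therefore not the real obstacle; the obstacle is that the Hadamard order you produce from $p$ is not pinned down tightly enough.

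The missing idea is a one-line uniformisation: when $p \equiv 3 \pmod 4$, Paley's first construction gives a Hadamard matrix of order $p+1$, and applying the Sylvester doubling to it gives one of order $2(p+1)$. Hence \emph{every} odd prime $p$ yields a Hadamard order equal to exactly $2(p+1)$, regardless of $p \bmod 4$. With this, consecutive odd primes $p < p'$ give Hadamard orders $2(p+1) < 2(p'+1)$ at distance exactly $2(p'-p)$, and since $\{2(p+1): p \text{ an odd prime}\} \subseteq {\cal H}$, the bound $\gamma(n) \le 2\lambda(n/2-1)$ for $n \ge 8$ drops out of the definitions with no constants to absorb.
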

\begin{proof}
If $p$ is an odd prime, then $n = 2(p+1) \in \cal H$. This follows
from the second Paley construction~\cite{Paley33} if $p \equiv 1 \pmod 4$,
or from the first Paley construction followed by the Sylvester construction
if $p \equiv 3 \pmod 4$.  Thus, if $p_i$, $p_{i+1}$ are consecutive odd primes,
then $n_j = 2(p_i + 1) \in \cal H$,\linebreak
$n_k = 2(p_{i+1} + 1) \in \cal H$, and $k > j$.
The result now follows from the definitions of the two gap functions.
\end{proof}

\begin{remark}
{\rm
De Launey and Gordon~\cite{LG} have shown that the
sequence of Hadamard orders $(n_i)$ is asymptotically denser
than the sequence of primes. Even if we consider only the Paley and
Sylvester constructions and Kronecker products arising from
them~\cite{Agaian}, 
we can frequently find Hadamard matrices whose orders
lie in the interior of the interval
$(2(p_i+1),2(p_{i+1}+1))$ defined by 
a large prime gap. 
}
\end{remark}

\begin{corollary}	\label{cor:delta_ineq}
For $n \ge 8$, we have $\delta(n) \le \lambda(n/2-1)$.
\end{corollary}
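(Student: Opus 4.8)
The plan is to derive Corollary~\ref{cor:delta_ineq} directly from Lemma~\ref{lemma:gamma_ineq} by observing that $\delta(n)$ is controlled by the Hadamard gap function $\gamma$ evaluated near $n$, and then halving.

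First I would make precise the relationship between $\delta$ and $\gamma$. Fix $n \ge 8$. Let $n_i$ be the largest Hadamard order with $n_i \le n$ (this exists since $1, 2, 4, 8 \in \cal H$ and $n \ge 8$), and let $n_{i+1}$ be the next Hadamard order, so $n_i \le n < n_{i+1}$. Then $\delta(n) = \min(n - n_i, n_{i+1} - n) \le \tfrac{1}{2}(n_{i+1} - n_i) \le \tfrac{1}{2}\gamma(n)$, where the last inequality holds because $n_i \le n$ places the gap $n_{i+1} - n_i$ among those defining $\gamma(n)$. (There is a trivial edge case if $n$ itself is a Hadamard order, where $\delta(n) = 0$ and the bound holds vacuously; I would dispose of this in one line.)

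Next I would apply Lemma~\ref{lemma:gamma_ineq}, which gives $\gamma(n) \le 2\lambda(n/2 - 1)$ for $n \ge 8$. Combining with the previous step yields $\delta(n) \le \tfrac{1}{2}\gamma(n) \le \lambda(n/2 - 1)$, which is exactly the claim.

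I do not anticipate a genuine obstacle here — the corollary is essentially a repackaging of the lemma together with the elementary fact that the distance from a point to the nearest element of a set is at most half the length of the interval of the complement containing it. The only point requiring a modicum of care is the bookkeeping at the boundary: ensuring that the gap $n_{i+1} - n_i$ straddling $n$ is indeed one of the gaps counted by $\gamma(n)$ (it is, since its left endpoint $n_i \le n$), and handling $n \in \cal H$ separately. Everything else is immediate.
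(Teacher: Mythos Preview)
Your proof is correct and follows essentially the same approach as the paper: the paper's proof is simply ``By the definition of $\delta(n)$ we have $\delta(n) \le \gamma(n)/2$, so the result follows from Lemma~\ref{lemma:gamma_ineq},'' and you have spelled out in detail the inequality $\delta(n) \le \gamma(n)/2$ that the paper asserts without justification. Your bookkeeping (choosing $n_i \le n < n_{i+1}$, bounding the minimum by half the gap, and noting $n_i \le n$ so the gap is counted in $\gamma(n)$) is all sound.
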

\begin{proof}
By the definition of $\delta(n)$ we have $\delta(n) \le \gamma(n)/2$,
so the result follows from Lemma~\ref{lemma:gamma_ineq}.
\end{proof}

Lemma~\ref{lemma:ineq1} gives an inequality that is often useful.

\begin{lemma} \label{lemma:ineq1}
If $\alpha \in \R$, $n \in \Z$, and $n > |\alpha| > 0$, then
\[\frac{(n-\alpha)^{n-\alpha}}{n^n} >
\left(\frac{1}{ne}\right)^\alpha\,\raisedot\]
\end{lemma}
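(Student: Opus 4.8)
The plan is to take logarithms and reduce the claimed inequality to a one-variable estimate. Writing $f(\alpha) = (n-\alpha)\log(n-\alpha) - n\log n$ for the logarithm of the left-hand side, and $g(\alpha) = -\alpha(\log n + 1) = \alpha\log\frac{1}{ne}$ for the logarithm of the right-hand side, it suffices to show $f(\alpha) > g(\alpha)$ for all real $\alpha$ with $0 < |\alpha| < n$. First I would note that $f(0) = 0 = g(0)$, so the two sides agree at $\alpha = 0$; the idea is then to compare derivatives. One computes $f'(\alpha) = -\log(n-\alpha) - 1$ and $g'(\alpha) = -\log n - 1$, so $f'(\alpha) - g'(\alpha) = \log n - \log(n-\alpha) = \log\frac{n}{n-\alpha}$. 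This difference is positive precisely when $\alpha > 0$ and negative when $\alpha < 0$ (using $0 < n - \alpha$, which holds on the allowed range, together with $n - \alpha < n \iff \alpha > 0$).

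Hence $h(\alpha) := f(\alpha) - g(\alpha)$ satisfies $h(0) = 0$, $h'(\alpha) > 0$ for $\alpha \in (0,n)$, and $h'(\alpha) < 0$ for $\alpha \in (-n, 0)$. Therefore $h$ is strictly increasing to the right of $0$ and strictly decreasing to the left of $0$, so $h(\alpha) > h(0) = 0$ for every $\alpha$ with $0 < |\alpha| < n$. Exponentiating gives the strict inequality as stated. I would present this cleanly by splitting into the two cases $\alpha > 0$ and $\alpha < 0$ and invoking the mean value theorem on $[0,\alpha]$ or $[\alpha,0]$ respectively, since that avoids any appeal to monotonicity of $h$ and keeps the argument self-contained: for $\alpha>0$ there is $\xi\in(0,\alpha)$ with $h(\alpha) = \alpha h'(\xi) = \alpha\log\frac{n}{n-\xi} > 0$, and symmetrically for $\alpha<0$ one gets $h(\alpha) = \alpha h'(\xi)$ with $\xi\in(\alpha,0)$, where now $\alpha<0$ and $h'(\xi) = \log\frac{n}{n-\xi}<0$, so again $h(\alpha)>0$.

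There is no real obstacle here; the only points requiring a little care are bookkeeping ones. One must check that $n - \alpha > 0$ throughout, which is exactly the hypothesis $n > |\alpha|$, so that $\log(n-\alpha)$ is defined and the function $(n-\alpha)^{n-\alpha}$ makes sense as $\exp\big((n-\alpha)\log(n-\alpha)\big)$. One should also double-check the sign in the case $\alpha < 0$: there $n - \alpha > n$, so $\log\frac{n}{n-\alpha} < 0$, but this is multiplied by the negative quantity $\alpha$, yielding a positive product — this sign flip is the one spot where a careless reader might go wrong, so I would spell it out. The fact that the inequality is strict (rather than $\ge$) comes for free since $\xi$ lies strictly between $0$ and $\alpha$, so $\log\frac{n}{n-\xi} \neq 0$.
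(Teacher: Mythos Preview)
Your proof is correct, but it takes a different route from the paper's. Both arguments begin by taking logarithms; after that the paper substitutes $x=\alpha/n$ to reduce everything to the one-variable inequality $(1-x)\ln(1-x)+x>0$ for $0<|x|<1$, which it proves by expanding $\ln(1-x)$ as a Taylor series to get $\sum_{k\ge 2}\frac{x^k}{(k-1)k}$ and observing that this sum is positive (all terms positive when $x>0$; alternating with strictly decreasing magnitudes when $x<0$). You instead differentiate the difference $h(\alpha)=f(\alpha)-g(\alpha)$ directly and use monotonicity (or the mean value theorem) to show $h$ has a strict minimum at $\alpha=0$. The two are equivalent after the substitution---your $h(\alpha)$ is exactly $n$ times the paper's series---but your approach avoids the Taylor expansion and the alternating-series check, trading them for a small amount of sign bookkeeping in the $\alpha<0$ case; the paper's version is a line shorter and makes the quadratic order of the gap at $\alpha=0$ visible.
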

\begin{proof}
Taking logarithms, and writing $x = \alpha/n$,
the inequality reduces to
\[(1-x)\ln(1-x) + x > 0,\]
or equivalently (since $0 < |x| < 1$)
\[\frac{x^2}{1\cdot 2} + \frac{x^3}{2\cdot 3} + \frac{x^4}{3\cdot 4} +
\cdots > 0.\]
This is clear if $x>0$, and also if $x < 0$ because then the terms alternate
in sign and decrease in magnitude.
\end{proof}

Recently Sz\"oll\H{o}si~\cite[Proposition 5.5]{Szollosi10} established
an elegant correspondence between the minors of order $n$ and of order $h-n$
of a Hadamard matrix of order $h$.  His result applies to complex Hadamard
matrices, of which $\{+1,-1\}$-Hadamard matrices are a special case.
More precisely, if $d+n = h$, $0 < d < h$, then for each minor
of order $d$ and value $\Delta$ there corresponds
a minor of order $n$ and value $\pm h^{h/2 - d}\Delta$. 
Previously, only a few special cases (for small~$d$ or $n$, see for
example~\cite{DP88,KMS01,SXKM03,Sharpe07}) were known. We
note that Sz\"oll\H{o}si's crucial Lemma~5.7 follows easily from
Jacobi's determinant identity~\cite{BS83,Gantmacher,Jacobi},
although Sz\"oll\H{o}si gives a different proof.\footnote{In the Appendix
we give a proof of Sz\"oll\H{o}si's Lemma~5.7 using Jacobi's identity.}

\begin{lemma} \label{lemma:minor}
Suppose $0 < n < h$ and $h \in \cal H$.
Then $D(n) \ge 2^{d-1}h^{h/2 - d}$, where $d = h-n$.
\end{lemma}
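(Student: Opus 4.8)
The plan is to use Sz\"oll\H{o}si's correspondence (quoted just before the lemma) to convert a small minor of a Hadamard matrix $H \in \mathcal{H}_h$ into a large minor of order $n = h - d$, and then to note that a minor of order $n$ of a $\{+1,-1\}$-matrix is itself the determinant of some $n \times n$ $\{+1,-1\}$-matrix, hence is bounded in absolute value by $D(n)$. So the whole argument reduces to exhibiting a $d \times d$ submatrix of $H$ whose determinant is as large as possible in absolute value.

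First I would produce a $d \times d$ submatrix of $H$ with determinant of absolute value at least $2^{d-1}$. The natural way is via the map $\beta_d$ (or rather, the observation behind it): any $\{+1,-1\}$-matrix of order $d$ can be row/column normalized so that its first row and column are all $+1$, and then subtracting the first row from the others produces a $\{0,-2\}$-pattern giving a factor $2^{d-1}$ times the determinant of a $\{0,1\}$-matrix of order $d-1$. Concretely, I want a $d \times d$ submatrix $B$ of $H$ with $|\det B| = 2^{d-1}|\det C|$ for some $\{0,1\}$-matrix $C$ of order $d-1$, and then it suffices to choose the rows and columns of $H$ so that $C$ can be taken to have determinant at least $1$ — e.g. so that $C$ is (a submatrix conjugate to) a triangular matrix with unit diagonal. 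Since $H$ has order $h > d$, there is plenty of freedom: normalize $H$ so that its first row and first column are all $+1$ (this does not change $|\det|$ of any minor up to sign in the relevant way), and then pick out a $d \times d$ leading-type submatrix whose associated $(d-1)\times(d-1)$ $\{0,1\}$-matrix is invertible over $\mathbb{Q}$. One clean choice: take the $d \times d$ submatrix indexed by the first row/column together with $d-1$ further rows and columns chosen greedily so the $\{0,1\}$-matrix stays nonsingular; such a choice exists because the full $\{0,1\}$-matrix $\beta_h(H)$ (of order $h-1 \ge d-1$) has rank $h-1$, so it contains a nonsingular $(d-1)\times(d-1)$ submatrix. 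This yields $|\det B| \ge 2^{d-1}$.

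Next, apply Sz\"oll\H{o}si's result with this $B$: to the minor of order $d$ with value $\Delta = \det B$, $|\Delta| \ge 2^{d-1}$, there corresponds a minor of $H$ of order $n = h - d$ with value $\pm h^{h/2 - d}\Delta$. Call the corresponding $n \times n$ submatrix $A$; then $A$ is a $\{+1,-1\}$-matrix of order $n$ with
\[
|\det A| = h^{h/2 - d}\,|\Delta| \ge 2^{d-1} h^{h/2 - d}.
\]
By definition $D(n) \ge |\det A|$, which is exactly the claimed bound.

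The main obstacle is the first step: one must be sure that a $d \times d$ submatrix with $|\det| \ge 2^{d-1}$ actually exists inside \emph{every} Hadamard matrix of order $h > d$, not merely inside some convenient one. The rank argument above handles this — $\beta_h(H)$ has full rank $h-1 \ge d-1 > 0$, so it has a nonsingular $(d-1)\times(d-1)$ minor, whose absolute value is a positive integer, hence $\ge 1$ — but one should double-check the bookkeeping that relates a nonsingular $(d-1)\times(d-1)$ $\{0,1\}$-submatrix of $\beta_h(H)$ back to a $d \times d$ $\{+1,-1\}$-submatrix of $H$ via a row/column pattern compatible with the $\beta$ construction; this is where care is needed, and it is plausible the authors instead use the explicit bound $D(d) \ge 2^{d-1}$ (itself a consequence of $\beta_d$ applied to the identity $\{0,1\}$-matrix) together with the fact that $H$ contains a submatrix realizing it, or simply take $B$ to be any $d \times d$ submatrix of $H$ and invoke a known lower bound on the maximal determinant of a submatrix of a Hadamard matrix.
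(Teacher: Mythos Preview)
Your overall strategy matches the paper's exactly: find a $d\times d$ submatrix of $H$ with determinant of absolute value at least $2^{d-1}$, then apply Sz\"oll\H{o}si's correspondence to obtain an $n\times n$ submatrix with the required determinant. The only difference is in how the existence of such a $d\times d$ submatrix is established. The paper argues more directly: if \emph{every} $d\times d$ minor of $H$ (in the complementary position to some $n\times n$ submatrix) vanished, then by Laplace's expansion and induction one would have $\det H=0$, contradicting that $H$ is Hadamard. Hence some $d\times d$ submatrix $M'$ is nonsingular; being a $\{\pm1\}$-matrix of order $d$, it then automatically satisfies $|\det M'|\ge 2^{d-1}$, since $|\det M'|=2^{d-1}|\det\beta_d(M')|$ and $\beta_d(M')$ is a nonsingular integer matrix. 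Your route through nonsingular submatrices of $\beta_h(H)$ is also correct, and the bookkeeping worry you raise is resolvable (for a normalized $H$, the submatrix of $\beta_h(H)$ in rows $I$ and columns $J$ is precisely $\beta_d$ of the submatrix of $H$ in rows $\{1\}\cup(I+1)$ and columns $\{1\}\cup(J+1)$), but the paper's Laplace-expansion argument sidesteps this entirely and is cleaner.
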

\begin{proof}
Let $H \in {\cal H}_h$ be a Hadamard matrix of order~$h$,
and let $M$ be any $n\times n$ submatrix of $H$ (so $M$ does not necessarily
have contiguous rows or columns in $H$).  Let $M'$ be the
$d\times d$ submatrix consisting of the intersection of the 
complementary set of rows and columns of $H$.  Some
such $M'$ must be nonsingular, else we could prove,
using Laplace's expansion of the determinant and induction on $n$,
that $\det(H)=0$, contradicting the assumption that
$H$ is a Hadamard matrix.  
Thus, without loss of generality, $\det(M') \ne 0$.
Since $M'$ is a $\{\pm1\}$-matrix, we must have
$|\det(M')| \ge 2^{d-1}$.
By~Sz\"oll\H{o}si's theorem,
$|\det(M)| = h^{h/2-d}|\det(M')| \ge 2^{d-1}h^{h/2-d}$.
\end{proof}
\begin{remark}
{\rm
We could improve Lemma~\ref{lemma:minor} for large $d$ 
by using the fact that,
from a result of de Launey and Levin~\cite[proof of Prop.~5.1]{LL}, 
there exists $M'$ with
$|\det(M')| \ge (d!)^{1/2}$, which is asymptotically larger than the bound
$|\det(M')| \ge 2^{d-1}$ that we used in our proof.
However, in our application of the
lemma, $h \gg d$, so it is the power of $h$ in the bound that is
significant.
}
\end{remark}

\begin{lemma} \label{lemma:major}
Suppose $0 < h < n$ and $h \in \cal H$. Then
$D(n) \ge 2^{n-h}h^{h/2}$.
\end{lemma}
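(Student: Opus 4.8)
The plan is to take a Hadamard matrix $H$ of order $h$ and build from it a larger $\{+1,-1\}$-matrix of order $n$ whose determinant we can bound below. The natural construction is a block matrix of the form
\[
A = \begin{pmatrix} H & J \\ J' & B \end{pmatrix},
\]
where $J, J', B$ are $\{+1,-1\}$-blocks of sizes $h\times(n-h)$, $(n-h)\times h$, and $(n-h)\times(n-h)$ respectively, chosen so that the off-diagonal blocks ``interfere'' as little as possible with the large block $H$. The cleanest choice is to take $J$ and $J'$ to be all-ones blocks (or columns that are $\pm$ copies of a single sign pattern), so that after row/column operations the Schur complement of $H$ in $A$ becomes something tractable.

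The key computational step is the Schur complement identity
\[
\det(A) = \det(H)\,\det\!\left(B - J'H^{-1}J\right).
\]
Since $H$ is Hadamard, $H^{-1} = h^{-1}H^{\mathsf T}$, so $J'H^{-1}J = h^{-1}J'H^{\mathsf T}J$, and with $J,J'$ chosen as rank-one all-ones blocks this product is itself a low-rank (in fact rank-one) matrix with entries that are small relative to~$h$. One then wants to choose $B$ so that $B - J'H^{-1}J$ has determinant at least $2^{n-h}$ — morally, $B$ should look like a $\{\pm1\}$ lower-triangular matrix (determinant $\pm 1$ per diagonal entry is too weak; rather one uses the standard fact that a $\{\pm1\}$-matrix of order $m$ with a suitable sign structure — e.g. a matrix whose associated $\{0,1\}$-matrix via $\beta_m$ is triangular — has $|\det| = 2^{m-1}\cdot 2^{?}$). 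More efficiently: the simplest route is to build $A$ recursively, adjoining one row and one column at a time to pass from order $h$ to order $h+1$, $h+2$, \dots, $n$, at each step doubling (or nearly doubling) the determinant in absolute value by choosing the new row/column to make the new diagonal entry of the Schur complement equal to $\pm 2$ while keeping earlier structure intact; iterating $n-h$ times gives the factor $2^{n-h}$.

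I expect the main obstacle to be verifying that the one-step extension genuinely multiplies $|\det|$ by~$2$ (not merely by something $>1$), and that this can be done simultaneously with keeping all matrix entries in $\{+1,-1\}$ rather than in $\{+1,-1,0\}$ or larger. Concretely, if $C$ is the current $k\times k$ matrix with $|\det C|$ known, one adjoins a column $c$ and row $(r^{\mathsf T}\, |\, t)$ with $r,c\in\{\pm1\}^k$, $t\in\{\pm1\}$; the new determinant is $t\det(C) - r^{\mathsf T}\operatorname{adj}(C)c$, and one must exhibit choices of $r,c,t$ forcing this to equal $\pm2\det(C)$. The trick is to arrange the added row and column to agree with an existing row and column of $C$ except in one coordinate, so that the bordered determinant collapses (by subtracting that row and column) to $\pm 2$ times a cofactor equal in magnitude to $|\det C|$. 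Making this bookkeeping work cleanly — i.e. showing one can always find such a ``near-duplicate'' row and column inside a Hadamard matrix and its successive extensions — is the crux; once it is in place, the bound $D(n) \ge 2^{n-h}|\det H| = 2^{n-h}h^{h/2}$ follows immediately by induction on $n-h$.
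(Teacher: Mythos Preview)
Your recursive extension idea is sound, and the ``crux'' you left open is easy to close: given any $\{\pm1\}$-matrix $C$ of order $k$, adjoin a copy of its first column as a new last column, a copy of its first row as the first $k$ entries of a new last row, and set the corner entry to $-C_{11}$. Subtracting row~$1$ from the new last row leaves $(0,\ldots,0,-2C_{11})$, so the bordered determinant is $-2C_{11}\det C$; thus $|\det|$ exactly doubles while all entries remain in $\{\pm1\}$. Iterating $n-h$ times from $H$ gives the bound. So there is no genuine obstacle, only bookkeeping you had not yet filled in.

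The paper's argument is different and shorter. It uses the standard bijection $\beta_m$ between $\{\pm1\}$-matrices of order $m$ and $\{0,1\}$-matrices of order $m-1$, under which $|\det|$ scales by $2^{m-1}$. Applying $\beta_h$ to $H$ yields a $\{0,1\}$-matrix of order $h-1$ with determinant $\pm 2^{1-h}h^{h/2}$; bordering this with $n-h$ rows and columns that are zero off the diagonal and $1$ on it preserves the determinant; then $\beta_n^{-1}$ produces a $\{\pm1\}$-matrix of order $n$ with determinant of absolute value $2^{n-1}\cdot 2^{1-h}h^{h/2}=2^{n-h}h^{h/2}$. No Schur complements, no induction. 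If you unwind $\beta$, the resulting matrix is (after normalizing $H$ to have first row and column all $+1$) exactly your block matrix with $J,J'$ all-ones and $B=J-2I$; with that choice, since $H^{-1}\mathbf{1}=e_1$, the Schur complement $B-J'H^{-1}J$ collapses to $-2I$, explaining the factor $2^{n-h}$ at one stroke. So your Schur-complement route and the paper's $\beta$-map route are the same construction viewed from two sides; the $\beta$-map just hides the linear algebra.
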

\begin{proof}
The case $h=1$ is trivial, so suppose that $h > 1$.
Let $H \in {\cal H}_h$ be a Hadamard matrix of order~$h$, so
$H$ has determinant $\pm h^{h/2}$ and the corresponding $\{0,1\}$-matrix
$\beta_h(H)$ has determinant $\pm 2^{1-h}h^{h/2}$. We can
construct a $\{0,1\}$-matrix $A$ of order $n-1$ and the same determinant
as $\beta_h(H)$ by adding a border of $n-h$ rows and columns (all zero
except for the diagonal entries). Now construct a $\{+1,-1\}$-matrix 
$B \in \beta_n^{(-1)}(A)$ by applying the standard mapping from 
$\{0,1\}$-matrices to $\{+1,-1\}$-matrices. We have
$|\det(B)| = 2^{n-1}|\det(A)| = 2^{n-h}h^{h/2}$.
\end{proof}

\begin{lemma} \label{lemma:n_delta_bound}
Let $n \in \Z^{+}$ and $\delta = \delta(n)$ be defined by \eqref{def:delta}.
Then $n \ge 3\delta$.
\end{lemma}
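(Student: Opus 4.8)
The plan is to show that the distance function $\delta(n)$ is never more than $n/3$ by exhibiting, for every $n$, a Hadamard order $h$ within distance $n/3$. The natural strategy is to use the small Hadamard orders $1$, $2$, $4$, $8$, $12$ to handle small $n$ directly by inspection, and then for larger $n$ to invoke Corollary~\ref{cor:delta_ineq} together with a known bound on the prime gap function $\lambda$. Specifically, if $n \ge 8$ then $\delta(n) \le \lambda(n/2 - 1)$, so it suffices to prove $\lambda(n/2-1) \le n/3$ for all $n$ beyond some threshold, and then check the finitely many remaining cases.

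The key steps, in order, would be: (1) For $n$ in a small initial range (say $n \le 32$ or so), verify $n \ge 3\delta(n)$ directly, using that $1,2,4,8,12,16,20,24,28,32,\ldots$ are all Hadamard orders; the worst local case is something like $n = 6$, where the nearest Hadamard orders are $4$ and $8$, giving $\delta(6) = 2$ and $18 \ge 6$, comfortably true. (2) For intermediate $n$, use the fact that by Paley and Sylvester constructions every multiple of $4$ up to $664$ (as noted in the introduction, citing~\cite{KT}) is a Hadamard order, so $\delta(n) \le 2$ there, and $n \ge 6 > 3 \cdot 2$ is immediate once $n \ge 8$. (3) For large $n$, apply Corollary~\ref{cor:delta_ineq} to reduce to a statement about $\lambda$, then invoke an explicit bound such as Bertrand's postulate — which already gives $\lambda(x) \le x$, hence $\delta(n) \le \lambda(n/2-1) < n/2$, not quite enough — or better, a bound of the form $\lambda(x) \ll x^{0.6}$ from~\cite{BHP}, which makes $\lambda(n/2-1) \le n/3$ hold for all sufficiently large $n$.

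The main obstacle is the middle range: Bertrand's postulate alone only yields $\delta(n) < n/2$, so to get the factor $3$ one needs either a sharper prime-gap input or a more careful combinatorial argument exploiting the density of Hadamard orders (not just those coming from a single prime gap). The cleanest route is probably to observe that between $x$ and $2x$ there are at least two primes for $x$ large enough (a quantitative Bertrand-type statement, or simply the prime number theorem), so consecutive Hadamard orders of Paley type are spaced by at most roughly a third of the interval; combined with the Sylvester doubling construction, which fills in orders $2^k$ and their multiples, one covers $[n/3 \text{-neighbourhoods of } n]$ for all $n \ge 8$. Then only the finitely many $n < 8$ — namely $n \in \{1,2,\ldots,7\}$ — need be checked by hand, and each satisfies $n \ge 3\delta(n)$ since $\delta(n) \le 1$ for $n \in \{1,2,3,4,5\}$ and $\delta(6) = \delta(7) = \ldots$ wait, $\delta(7) = 1$ and $\delta(6) = 2$, so $6 \ge 6$ holds with equality and the rest are strict. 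The real work is packaging the large-$n$ estimate cleanly; I expect the authors may simply cite $\gamma(n) \le 2n/3$ or an equivalent and reduce everything to Lemma~\ref{lemma:gamma_ineq} plus a handful of base cases.
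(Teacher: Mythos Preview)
Your proposal is not wrong in spirit, but it is far more elaborate than necessary and you never actually close the argument: you identify the ``main obstacle'' (Bertrand alone gives only $\delta(n)<n/2$) and then gesture at sharper prime-gap inputs or a density argument without committing to one. Invoking the Baker--Harman--Pintz bound is overkill and, as stated in Remark~\ref{remark:Hoheisel}, only holds for $n\ge n_0$ with an unspecified effective constant, so you would still owe a clean treatment of the intermediate range; relying on the fact that every multiple of~$4$ up to $664$ is a Hadamard order drags in a substantial computational result for what should be an elementary lemma.

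The paper's proof bypasses all of this with a single observation you overlooked: the interval $[2n/3,\,4n/3)$ has length ratio exactly~$2$, so it contains a power of two, say~$h$; by the Sylvester construction $h\in\mathcal{H}$, and $|n-h|\le n/3$ immediately gives $\delta(n)\le n/3$. No prime gaps, no case analysis beyond this, no appeal to Corollary~\ref{cor:delta_ineq}. Your closing guess that the authors ``may simply cite $\gamma(n)\le 2n/3$'' is in the right direction, but the actual mechanism is even more direct than anything involving~$\gamma$ or~$\lambda$.
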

\begin{proof}
The interval $[2n/3,4n/3)$ contains a unique power of two, say $h$.
By the Sylvester construction, $h \in {\cal H}$.  However,
$|n-h| \le n/3$, so $\delta \le n/3$.
\end{proof}

\begin{theorem} \label{thm:unconditional1}
If $n\in\Z^{+}$ and
$\delta = \min_{h\in\cal H}|n-h|$, then
\begin{equation} \label{eq:uncondbd}
\Dbar(n) \ge \left(\frac{4}{ne}\right)^{\delta/2}.
\end{equation}
\end{theorem}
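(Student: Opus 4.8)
The plan is to derive the bound on $\Dbar(n) = D(n)/n^{n/2}$ by splitting into the two cases according to whether the nearest Hadamard order $h$ lies above or below $n$, applying Lemma~\ref{lemma:minor} in the first case and Lemma~\ref{lemma:major} in the second, and then in each case using Lemma~\ref{lemma:ineq1} to bound the resulting ratio of powers. The case $\delta = 0$ is trivial since then $n \in \cal H$, $\Dbar(n) = 1$, and the right-hand side of \eqref{eq:uncondbd} is at most $1$ (note $n \ge 4$ when $\delta = 0$ and $n > 2$, while for $n \in \{1,2\}$ one checks directly). So assume $\delta \ge 1$; by Lemma~\ref{lemma:n_delta_bound} we have $n \ge 3\delta$, which in particular gives $n > \delta$ and makes Lemma~\ref{lemma:ineq1} applicable with $\alpha = \pm\delta$.

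First I would treat the case $h > n$, so $d = h - n = \delta$. Lemma~\ref{lemma:minor} gives $D(n) \ge 2^{\delta - 1} h^{h/2 - \delta}$, hence
\[
\Dbar(n) \ge \frac{2^{\delta-1} h^{h/2-\delta}}{n^{n/2}} = 2^{\delta - 1}\,\frac{h^{(h-\delta)/2}}{n^{(h-\delta)/2}}\cdot\frac{1}{h^{\delta/2}} = 2^{\delta-1}\left(\frac{h}{n}\right)^{(n)/2}\! h^{-\delta/2},
\]
where I have used $h - \delta = n$. Now $h/n = 1 + \delta/n > 1$, so $(h/n)^{n/2} \ge 1$, and since $h = n + \delta \le n + n/3 < 2n$ (using $n \ge 3\delta$), we get $h^{-\delta/2} > (2n)^{-\delta/2}$. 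Combining, $\Dbar(n) \ge 2^{\delta-1}(2n)^{-\delta/2} = \tfrac12\, 2^{\delta}\, 2^{-\delta/2} n^{-\delta/2} = \tfrac12 (2/n)^{\delta/2}\cdot 2^{\delta/2}\cdot\! \ldots$; a cleaner route is to write $\Dbar(n) \ge 2^{\delta-1} n^{-\delta/2} h^{-\delta/2} (h/n)^{n/2}$ and bound each factor, but the target form suggests instead keeping $h^{h/2-\delta}/n^{n/2}$ together and applying Lemma~\ref{lemma:ineq1} more carefully. Writing $h^{h/2-\delta} = h^{(h-2\delta)/2} = h^{(n-\delta)/2}$ and comparing with $n^{n/2}$ is awkward because the exponents differ; the better bookkeeping is to compare $h^{h}$ with $n^{n}$ after accounting for the $\delta$ discrepancy, which is exactly what Lemma~\ref{lemma:ineq1} is designed for with $n \leftrightarrow h$ and $\alpha = d = \delta$: it yields $h^{h-\delta}/h^{h} < \ldots$ — actually the clean statement to use is $(h-\delta)^{h-\delta}/h^h > (he)^{-\delta}$, i.e. $n^{n}/h^{h} > (he)^{-\delta}$, giving $h^{h/2} < n^{n/2}(he)^{\delta/2}$, whence $\Dbar(n) \ge 2^{\delta-1} h^{-\delta} h^{h/2} n^{-n/2} > 2^{\delta - 1} h^{-\delta}(he)^{-\delta/2} = 2^{\delta-1}(h^3 e)^{-\delta/2}$, and then $h < 2n$ gives the stated bound up to constants — I will need to check the constant $4$ works out, which it does since $2^{\delta-1}\cdot 2^{-3\delta/2} = \tfrac12 (2^{-1/2})^{\delta}\cdots$ should be reconciled against $(4/(ne))^{\delta/2} = 2^{\delta} (ne)^{-\delta/2}$.

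For the case $h < n$, we have $n - h = \delta$ and Lemma~\ref{lemma:major} gives $D(n) \ge 2^{\delta} h^{h/2}$, so $\Dbar(n) \ge 2^{\delta} h^{h/2}/n^{n/2}$. Here I apply Lemma~\ref{lemma:ineq1} directly with $\alpha = \delta$ (and $n$ playing its own role), which gives $h^{h}/n^{n} = (n-\delta)^{n-\delta}/n^n > (ne)^{-\delta}$, hence $h^{h/2}/n^{n/2} > (ne)^{-\delta/2}$ and
\[
\Dbar(n) \ge 2^{\delta}(ne)^{-\delta/2} = \left(\frac{4}{ne}\right)^{\delta/2},
\]
which is exactly \eqref{eq:uncondbd} — so this case is clean and gives the bound on the nose.

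The main obstacle is the $h > n$ case: there the power of $h$ appearing (namely $h^{h/2 - d}$ rather than $h^{h/2}$) means one does not get the bound quite as cleanly, and one must absorb the extra factor $h^{-\delta}$ using $h < \tfrac{4}{3}n < 2n$ to pay for it, at the cost of a worse constant than $4$ in that case — unless the factor $(h/n)^{n/2} > 1$ can be exploited to recover it. I expect that after the dust settles, the $h>n$ case actually yields a \emph{better} bound than \eqref{eq:uncondbd} (because $h^{h/2-d}$ with $h$ slightly larger than $n$ beats the naive estimate), so that the stated theorem is governed by the $h < n$ case, and the write-up will note that \eqref{eq:uncondbd} holds in both cases with equality essentially attained (up to the $2^{-1}$ slack) in the $h<n$ case. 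I would present the $h<n$ computation in full and remark that the $h>n$ case gives at least as strong a bound, deferring the sharper $h>n$ estimate to the next section where it is presumably used for Theorem~\ref{thm:conditional1} and Corollary~\ref{cor:improved}.
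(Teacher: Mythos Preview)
Your treatment of the case $h < n$ is exactly the paper's, and is correct. The case $h > n$, however, is left unresolved: both of your attempted manipulations fail to deliver \eqref{eq:uncondbd}, and the proposal to simply ``remark that the $h>n$ case gives at least as strong a bound'' is not justified. In your first attempt you discard the factor $(h/n)^{n/2}$, obtaining $\Dbar(n) \ge \tfrac12(2/n)^{\delta/2}$; but for $\delta=1$ this is $\tfrac12\sqrt{2/n}$, which is strictly less than $(4/(ne))^{1/2}=\sqrt{4/(ne)}$, so the bound is too weak. In your second attempt you apply Lemma~\ref{lemma:ineq1} with $h$ in the role of $n$ and $\alpha=\delta$, obtaining $n^n/h^h > (he)^{-\delta}$; this is an \emph{upper} bound on $h^{h/2}/n^{n/2}$, the wrong direction for a lower bound on $\Dbar(n)$, and the subsequent chain of inequalities does not follow.

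The missing idea is to apply Lemma~\ref{lemma:ineq1} with $\alpha=-\delta$ (and $n$ in its own role), which gives
\[
\frac{(n+\delta)^{n+\delta}}{n^n} > (ne)^{\delta},
\]
i.e.\ $h^{h/2}/n^{n/2} > (ne)^{\delta/2}$. Dropping the factor $2^{\delta-1}\ge 1$ in Lemma~\ref{lemma:minor}, one then gets
\[
\Dbar(n) \ge \frac{h^{h/2-\delta}}{n^{n/2}} = h^{-\delta}\cdot\frac{h^{h/2}}{n^{n/2}} > \left(\frac{ne}{h^2}\right)^{\delta/2} = \left(\frac{ne}{(n+\delta)^2}\right)^{\delta/2}.
\]
It remains to check $ne/(n+\delta)^2 \ge 4/(ne)$, equivalently $ne \ge 2(n+\delta)$, equivalently $n(e-2)\ge 2\delta$; since $e-2>2/3$ and Lemma~\ref{lemma:n_delta_bound} gives $n\ge 3\delta$, this holds. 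This is precisely the paper's argument for the $h>n$ case.
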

\begin{proof}
By the definition of $\delta$, there exists a Hadamard matrix $H$
of order $h = n \pm \delta$. If $\delta = 0$ the result is trivial,
so suppose $\delta \ge 1$.
We consider two cases.
First suppose that $h = n + \delta$.  Applying Lemma~\ref{lemma:minor},
we have \[D(n) \ge 2^{\delta-1}h^{h/2 - \delta} \ge h^{h/2 - \delta}.\]
Now, applying Lemma~\ref{lemma:ineq1} with $\alpha = -\delta$ gives
\[\Dbar(n) 
  \ge \frac{h^{h/2-\delta}}{n^{n/2}}
  = \frac{(n+\delta)^{(n+\delta)/2}}{n^{n/2}}(n+\delta)^{-\delta}
  \ge \left(\frac{ne}{(n+\delta)^2}\right)^{\delta/2}.
\]
By Lemma~\ref{lemma:n_delta_bound} we have 
$\delta/n \le 1/3 < (e/2 - 1)$, from which it is easy to verify that
$ne/(n+\delta)^2 > 4/(ne)$. 
The inequality \eqref{eq:uncondbd} follows.

Now suppose that $h = n - \delta$. 
From Lemma~\ref{lemma:major}
we have $D(n) \ge 2^\delta h^{h/2}$. 
Using Lemma~\ref{lemma:ineq1} with $\alpha = \delta$, we have
\[\Dbar(n) > 2^{\delta}\left(\frac{1}{ne}\right)^{\delta/2}
 = \left(\frac{4}{ne}\right)^{\delta/2}.\]
Thus, in all cases we have established the desired lower bound on $\Dbar(n)$. 
\end{proof}
\begin{remark}	\label{remark:LL}
{\rm
Consider $n>4$ in the interval $(n_i, n_{i+1})$ between two consecutive
Hadamard orders,
and write $\Delta := (n_{i+1}-n_i)/2 \ge 2$.
De Launey and Levin~\cite[Theorem 3]{LL} % also Proposition 5.1
take $d = n_{i+1}-n \le 2\Delta-1$
and give (in our notation) the bound $\Dbar(n) \ge n^{-d/2}$.
In contrast, our bound is $\Dbar(n) \ge n^{-\delta/2}$, where
$\delta \le \Delta$.
Note that $\max(d)+1 = 2\max(\delta) = 2\Delta$.
In the worst case, the bound of de Launey and Levin is
$n^{-(2\Delta-1)/2}$, whereas the worst case for our bound is
$n^{-\Delta/2}$.  Thus, we almost halve the exponent
of $n$ in the worst-case bound.
The reason for the difference is that de Launey and Levin always 
take a Hadamard
matrix with order $h = n_{i+1} > n$, whereas we take $h = n_i < n$ and use
Lemma~\ref{lemma:major} if that gives a sharper bound.
}
\end{remark}

\begin{corollary} \label{cor:uncond}
For $n\in\Z$, 
$n \ge 4$, % Omitted in earlier versions but should exclude $n=3$
\[\Dbar(n) \ge \left(\frac{4}{ne}\right)^{\lambda(n/2)/2},\]
where $\lambda(n)$ is the prime gap function defined above.
\end{corollary}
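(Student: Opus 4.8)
The plan is to combine Theorem~\ref{thm:unconditional1} with the bound $\delta(n) \le \lambda(n/2)$, exploiting the fact that the right-hand side of \eqref{eq:uncondbd} is a decreasing function of the exponent. First I would observe that for $n \ge 4$ we have $ne \ge 4e > 4$, hence $4/(ne) < 1$, so the map $x \mapsto (4/(ne))^{x/2}$ is strictly decreasing on $\R$. Consequently any upper bound $\delta(n) \le t$ yields $\Dbar(n) \ge (4/(ne))^{t/2}$ via Theorem~\ref{thm:unconditional1}.

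Next I would supply the required upper bound on $\delta(n)$. For $n \ge 8$, Corollary~\ref{cor:delta_ineq} gives $\delta(n) \le \lambda(n/2 - 1)$, and since $\lambda$ is nondecreasing and $n/2 - 1 \le n/2$ this implies $\delta(n) \le \lambda(n/2)$. For the remaining cases $4 \le n \le 7$ I would verify the inequality $\delta(n) \le \lambda(n/2)$ by direct computation: using $4, 8 \in \cal H$ one has $\delta(4) = 0$, $\delta(5) = 1$, $\delta(6) = 2$, $\delta(7) = 1$, while $\lambda(2) = \lambda(5/2) = 1$ and $\lambda(3) = \lambda(7/2) = 2$, so in each case $\delta(n) \le \lambda(n/2)$.

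Combining the two ingredients, for every integer $n \ge 4$ we obtain
\[\Dbar(n) \ge \left(\frac{4}{ne}\right)^{\delta(n)/2} \ge \left(\frac{4}{ne}\right)^{\lambda(n/2)/2},\]
which is the claimed bound. There is no substantial obstacle here; the only points needing a moment's care are checking that $4/(ne) < 1$ (so the exponential is monotone in the direction that makes the substitution $\delta(n) \leadsto \lambda(n/2)$ legitimate) and handling the small values $4 \le n \le 7$, which lie outside the range of validity of Corollary~\ref{cor:delta_ineq}.
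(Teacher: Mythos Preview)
Your proof is correct and follows essentially the same approach as the paper: apply Theorem~\ref{thm:unconditional1} together with Corollary~\ref{cor:delta_ineq} for $n \ge 8$, and treat $4 \le n \le 7$ separately. The only minor difference is that the paper disposes of the small cases by appealing to the known numerical values of $D(n)$, whereas you verify $\delta(n) \le \lambda(n/2)$ directly for $n \in \{4,5,6,7\}$ and then invoke Theorem~\ref{thm:unconditional1}; your route is self-contained and avoids the external reference.
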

\begin{proof}
For $n \ge 8$ this
follows from Theorem~\ref{thm:unconditional1}, using
Corollary~\ref{cor:delta_ineq}. It is easy
to check that the inequality holds
for $4 \le n < 8$ by using the known values of $D(n)$ listed
in~\cite{Orrick-www}. % Also OK for $n = 1, 2$, but fails for $n=3$
\end{proof}
\begin{remark} \label{remark:Hoheisel}
{\rm
In the literature
there are many inequalities for $\lambda(n)$,
see for example Hoheisel~\cite{Hoheisel} or Huxley~\cite{Huxley}.
The best result so far  seems to be that of 
Baker, Harman and Pintz~\cite{BHP}, who proved that
$\lambda(n) \le n^{21/40}$ for $n \ge n_0$, where $n_0$ is a sufficiently
large (effectively computable) constant.
Assuming the Riemann hypothesis, Cram\'er~\cite{Cramer} proved
that $\lambda(n) = O(n^{1/2}\log n)$.
``Cram\'er's conjecture'' is that
$\lambda(n) = O((\log n)^2)$, and numerical 
computations~\cite{Nicely-gaps,Shanks64,Silva}
provide some evidence for this conjecture.
For a discussion of other relevant results on prime gaps,
see~\cite[\S1]{LL}.
}
\end{remark}

\pagebreak[3]
\begin{corollary} \label{cor:asymptotics}
If $n\in N$, then
\[0 \le {n\ln n} - 2\ln D(n) = O(n^{21/40}\ln n)
  \;\text{\rm\ as }\; n \to \infty.\]
\end{corollary}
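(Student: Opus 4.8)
The plan is to observe that, by definition, $\Dbar(n) = D(n)/n^{n/2}$, so that $n\ln n - 2\ln D(n) = -2\ln\Dbar(n)$. The claimed two-sided estimate is therefore equivalent to
\[0 \le -2\ln\Dbar(n) = O(n^{21/40}\ln n) \quad\text{as } n\to\infty,\]
and I would prove the two halves separately.

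For the lower bound I would simply invoke Hadamard's inequality $\Dbar(n)\le 1$, which gives $-2\ln\Dbar(n)\ge 0$ immediately.

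For the upper bound I would feed Corollary~\ref{cor:uncond} into this identity. That corollary states $\Dbar(n)\ge (4/(ne))^{\lambda(n/2)/2}$ for $n\ge 4$, whence
\[-2\ln\Dbar(n) \le \lambda(n/2)\,\bigl(\ln(ne) - \ln 4\bigr) = \lambda(n/2)\,(\ln n + 1 - 2\ln 2).\]
It then remains only to bound the prime-gap term $\lambda(n/2)$. Here I would quote the theorem of Baker, Harman and Pintz (see Remark~\ref{remark:Hoheisel}): $\lambda(x)\le x^{21/40}$ for all sufficiently large $x$. Taking $x = n/2$ gives $\lambda(n/2)\le (n/2)^{21/40} < n^{21/40}$ once $n$ is large, so
\[-2\ln\Dbar(n) \le n^{21/40}(\ln n + 1 - 2\ln 2) = O(n^{21/40}\ln n),\]
as required. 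The finitely many small $n$ outside the Baker--Harman--Pintz range are harmless, since the implied constant may be chosen to cover them, and in fact it is effectively computable.

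I do not expect any real obstacle: the corollary is a direct consequence of Corollary~\ref{cor:uncond} combined with an off-the-shelf prime-gap bound, and the only points requiring care are the bookkeeping of the additive $1 - 2\ln 2$ term (which contributes only $O(n^{21/40})$, absorbed into $O(n^{21/40}\ln n)$) and the innocuous substitution $x\mapsto n/2$ in the prime-gap estimate. One could equally phrase the result with any admissible exponent $\theta$ in $\lambda(x) = O(x^\theta)$ in place of $21/40$; under the Riemann hypothesis this yields the sharper $O(n^{1/2}(\ln n)^2)$, and under Cram\'er's conjecture $O((\ln n)^3)$.
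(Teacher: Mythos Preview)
Your proof is correct and follows exactly the route the paper takes: the paper's own proof simply says the result follows from Corollary~\ref{cor:uncond} together with the Baker--Harman--Pintz theorem, and you have spelled out precisely those details, including the easy lower bound from Hadamard's inequality.
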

\begin{proof}
The result follows from Corollary~\ref{cor:uncond} and the theorem
of Baker, Harman and Pintz~\cite{BHP}.
\end{proof}

\section{Conditional lower bounds on 
 {\it D}({\it n})} % avoid hyperref warning by not using $D(n)$
 \label{sec:H_bounds}

In this section we assume the Hadamard conjecture
and give lower bounds on $D(n)$ that are sharper than
the unconditional bounds of~\S\ref{sec:bounds}.

The idea of the proof of Theorem~\ref{thm:conditional1}
is similar to that of Theorem~\ref{thm:unconditional1}~-- 
we use a Hadamard matrix of slightly
smaller or larger order to bound $D(n)$ when $n \not\equiv 0 \pmod 4$. In each
case, we choose whichever construction gives the sharper bound.
First we make a definition and state two well-known lemmas.
\begin{definition}
Let $A$ be a $\{\pm1\}$-matrix. 
The \emph{excess} of $A$ is $\sigma(A) := \sum_{i,j}a_{i,j}$.
If $n \in {\cal H}$, then % $w(n) := \max_{H \in {\cal H}_n} w(H)$ and 
$\sigma(n) := \max_{H \in {\cal H}_n} \sigma(H)$.
\end{definition}
The following lemma is a corollary of \cite[Theorem~1]{EM},
and gives a small improvement on Best's lower bound~\cite[Theorem~3]{Best}
$\sigma(h) \ge 2^{-1/2}h^{3/2}$.
\begin{lemma}	\label{lemma:sigma_lower}
If $4 \le h \in {\cal H}$, then 
\begin{equation*}	%\label{eq:SW1}
\sigma(h) \ge (2/\pi)^{1/2}h^{3/2}.
\end{equation*}
\end{lemma}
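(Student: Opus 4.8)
The plan is to combine the classical sign-change averaging argument with the cited result \cite[Theorem~1]{EM}. Let $H\in{\cal H}_h$. Negating the rows of $H$ according to a sign vector $x\in\{+1,-1\}^h$ and the columns according to $y\in\{+1,-1\}^h$ produces another Hadamard matrix of order $h$, with excess $x^{\top}Hy$. For a fixed $x$, choosing $y_j=\operatorname{sign}\bigl((x^{\top}H)_j\bigr)$ maximises this, giving excess $\sum_j\lvert(x^{\top}H)_j\rvert$; hence $\sigma(h)\ge\max_{x}\sum_j\lvert(x^{\top}H)_j\rvert$.

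Next I would average over $x$ chosen uniformly from $\{+1,-1\}^h$. For a fixed column $j$, the entry $(x^{\top}H)_j=\sum_i x_iH_{ij}$ is, as a function of $x$, a sum of $h$ independent $\pm1$ variables (the fixed signs $H_{ij}$ can be absorbed into the $x_i$), so its expected absolute value equals $\mathbb{E}\lvert S_h\rvert$, where $S_h$ is a sum of $h$ independent Rademacher variables, and $\mathbb{E}\lvert S_h\rvert=h\binom{h}{h/2}/2^h$. Summing over the $h$ columns, $\mathbb{E}_x\sum_j\lvert(x^{\top}H)_j\rvert=h\,\mathbb{E}\lvert S_h\rvert$, so some choice of $x$ (hence some Hadamard matrix) satisfies $\sigma(h)\ge h\,\mathbb{E}\lvert S_h\rvert$. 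Combined with the elementary bound $\mathbb{E}\lvert S_h\rvert\ge\sqrt{h/2}$, this already recovers Best's inequality $\sigma(h)\ge 2^{-1/2}h^{3/2}$ \cite{Best}.

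The difficulty — and the reason the lemma is stated as a corollary of a separate theorem — is that $h\,\mathbb{E}\lvert S_h\rvert$ is asymptotic to $(2/\pi)^{1/2}h^{3/2}$ but lies strictly below it for every finite $h$: the usual estimate $\binom{h}{h/2}/2^h<\sqrt{2/(\pi h)}$ runs in the wrong direction for this purpose, so plain averaging falls short of the constant $(2/\pi)^{1/2}$ by a term of order $h^{1/2}$. To close that gap one must exploit that $\sum_j\lvert(x^{\top}H)_j\rvert$ is \emph{not} constant in $x$, so that its maximum strictly exceeds its mean; this is the role of \cite[Theorem~1]{EM}, which one expects to provide either a lower bound on the variance of $\sum_j\lvert(x^{\top}H)_j\rvert$ (controlled via a fourth-moment estimate, to pass from ``positive variance'' to ``maximum above the mean'') or a sharper explicit construction. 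Granting that theorem the stated inequality follows by a routine computation, with the finitely many small orders $h$ for which the general estimate is not yet effective handled from the known values of $\sigma(h)$. The main obstacle is exactly this last step — upgrading the averaging bound to the precise constant $(2/\pi)^{1/2}$ — which is precisely what \cite[Theorem~1]{EM} is invoked to supply.
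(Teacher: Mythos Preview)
The paper itself gives no proof of this lemma: it is stated as ``a corollary of \cite[Theorem~1]{EM}'' and left at that. So there is nothing to compare your argument against except a bare citation.

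Your write-up is more expository than the paper's single sentence, and the exposition is accurate: the sign-change averaging argument is correct, the identity $\mathbb{E}\lvert S_h\rvert = h\binom{h}{h/2}/2^h$ is correct, the Khintchine-type bound $\mathbb{E}\lvert S_h\rvert\ge\sqrt{h/2}$ does yield Best's constant $2^{-1/2}$, and you are right that $\binom{h}{h/2}/2^h<\sqrt{2/(\pi h)}$ puts the mean strictly below the target, so averaging alone cannot reach $(2/\pi)^{1/2}$.

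But your proposal is not a proof either. When you reach the actual gap between $2^{-1/2}$ and $(2/\pi)^{1/2}$ you do exactly what the paper does---invoke \cite[Theorem~1]{EM}---only you add speculation (``one expects \ldots either a lower bound on the variance \ldots or a sharper explicit construction'') about what that theorem says, rather than stating and using its actual content. The phrase ``granting that theorem the stated inequality follows by a routine computation'' is not a proof step; it is a promissory note. If the goal was to supply a self-contained argument where the paper has none, you have not done so: the decisive input is still a black box, and your guess at what is inside the box is unverified. If the goal was merely to match the paper, you have matched it (both defer to \cite{EM}) while adding helpful but inessential context.
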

The following lemma is ``well-known''~-- it 
follows from~\cite[Theorem~2]{SW}
and is also mentioned in later works such as~\cite[pg.~166]{FK}.
\begin{lemma}	\label{lemma:sigma_upper}
If $h \in {\cal H}$, then
\begin{equation*}	%\label{eq:D_lower}
D(h+1) \ge h^{h/2}\left(1 + \frac{\sigma(h)}{h}\right).
\end{equation*}
\end{lemma}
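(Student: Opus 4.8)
The plan is to exhibit an explicit $\{+1,-1\}$-matrix of order $h+1$ whose determinant realizes the claimed lower bound, by bordering an extremal Hadamard matrix of order $h$ and evaluating the determinant with a one-step row reduction (equivalently, a Schur complement). First I would fix a Hadamard matrix $H \in {\cal H}_h$ achieving $\sigma(H) = \sigma(h)$; note that $\sigma(h) \ge 0$, since $-H \in {\cal H}_h$ whenever $H \in {\cal H}_h$ and $\sigma(-H) = -\sigma(H)$.

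Let $\mathbf 1$ be the all-ones column vector of length $h$, and for a parameter $\varepsilon \in \{+1,-1\}$ consider the order-$(h+1)$ $\{+1,-1\}$-matrix
\[
B \;=\; \begin{pmatrix} \varepsilon & \mathbf 1^{\mathsf T} \\[2pt] \mathbf 1 & H \end{pmatrix}.
\]
Since $HH^{\mathsf T} = hI$, the row vector $w^{\mathsf T} := h^{-1}\mathbf 1^{\mathsf T}H^{\mathsf T}$ satisfies $w^{\mathsf T}H = \mathbf 1^{\mathsf T}$, so subtracting the appropriate multiple of each of the last $h$ rows of $B$ from its first row replaces that row by $(\varepsilon - w^{\mathsf T}\mathbf 1,\ \mathbf 0^{\mathsf T})$. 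Expanding along the first row of the resulting matrix gives
\[
\det B \;=\; \Bigl(\varepsilon - w^{\mathsf T}\mathbf 1\Bigr)\det H \;=\; \Bigl(\varepsilon - \frac{\sigma(H)}{h}\Bigr)\det H,
\]
where I have used $w^{\mathsf T}\mathbf 1 = h^{-1}\mathbf 1^{\mathsf T}H^{\mathsf T}\mathbf 1 = h^{-1}\sum_{i,j}h_{i,j} = \sigma(H)/h$. Because $|\det H| = h^{h/2}$, choosing $\varepsilon = -1$ yields $|\det B| = h^{h/2}\bigl(1 + \sigma(h)/h\bigr)$, and as $B$ has order $h+1$ this gives the claim $D(h+1) \ge h^{h/2}\bigl(1+\sigma(h)/h\bigr)$.

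I do not expect a genuine obstacle: the argument is short and elementary. The only points needing a little care are (i) arranging $\sigma(H)\ge 0$ at the outset so that $\varepsilon=-1$ is the sign maximizing $|\varepsilon - \sigma(H)/h|$; (ii) checking that the row reduction is valid, which holds because $H$ is invertible; and (iii) the (trivial) boundary case $h=1$, for which the same computation gives $D(2)\ge 2$. One could instead simply invoke~\cite[Theorem~2]{SW}, but the self-contained derivation above seems worth recording.
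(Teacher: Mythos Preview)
Your argument is correct. The paper does not give its own proof of this lemma but simply cites \cite[Theorem~2]{SW} and \cite[pg.~166]{FK}; your bordering construction with the Schur-complement evaluation is precisely the standard argument underlying those references, so your proposal agrees with the intended approach.
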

\begin{theorem}	\label{thm:conditional1}
Assume the Hadamard conjecture. For $n \in \Z^{+}$, 
we have
\begin{equation} \label{eq:three_cases}
\Dbar(n) \ge 
\begin{cases}
\left(\frac{2}{\pi e}\right)^{1/2} &
 \text{\rm if $n \equiv 1 \pmod 4$,}\\
\left(\frac{8}{\pi e^2 n}\right)^{1/2} &
 \text{\rm if $n \equiv 2 \pmod 4$,}\\
(n+1)^{(n-1)/2}/n^{n/2} \;\,\sim \left(\frac{e}{n}\right)^{1/2} &
 \text{\rm if $n \equiv 3 \pmod 4$.}
\end{cases}
\end{equation}
\end{theorem}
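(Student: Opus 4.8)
The plan is to handle the three congruence classes separately, in each case choosing between the "minor" approach (Lemma~\ref{lemma:minor}, using a Hadamard matrix of order $h = n+d$) and the "major" approach (Lemma~\ref{lemma:major}, using a Hadamard matrix of order $h = n-d$), together with the excess refinement of Lemmas~\ref{lemma:sigma_lower} and~\ref{lemma:sigma_upper}. Since we assume the Hadamard conjecture, every positive multiple of~$4$ lies in $\cal H$, so for $n \not\equiv 0 \pmod 4$ we always have Hadamard orders within distance $d \le 2$ of $n$; this is what makes the bounds so much stronger than the unconditional ones.

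For $n \equiv 3 \pmod 4$: take $h = n+1$, a multiple of~$4$, and apply Lemma~\ref{lemma:sigma_upper} together with the lower bound $\sigma(h) \ge (2/\pi)^{1/2}h^{3/2}$ from Lemma~\ref{lemma:sigma_lower}. This gives $D(n+1) \ge h^{h/2}(1 + \sigma(h)/h)$, but we want a bound on $D(n)$, not $D(n+1)$; instead I would use Lemma~\ref{lemma:sigma_upper} in the reverse direction — or more directly, observe that a Hadamard matrix of order $h = n+1$ has, by the pigeonhole/Laplace argument in Lemma~\ref{lemma:minor} with $d=1$, an $n \times n$ minor of value $\pm h^{h/2-1}$, so $D(n) \ge (n+1)^{(n-1)/2}$, hence $\Dbar(n) \ge (n+1)^{(n-1)/2}/n^{n/2}$. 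The asymptotic $\sim (e/n)^{1/2}$ then follows from Lemma~\ref{lemma:ineq1} (or a direct expansion of $(1+1/n)^{(n-1)/2} \to e^{1/2}$ together with $(n+1)^{-1/2} \sim n^{-1/2}$).

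For $n \equiv 1 \pmod 4$: take $h = n-1$, again a multiple of~$4$. Apply Lemma~\ref{lemma:sigma_upper} with this~$h$ to get $D(n) = D(h+1) \ge h^{h/2}(1 + \sigma(h)/h) \ge h^{h/2}(1 + (2/\pi)^{1/2}h^{1/2})$ by Lemma~\ref{lemma:sigma_lower}. Then $\Dbar(n) \ge h^{h/2}(2/\pi)^{1/2}h^{1/2}/n^{n/2} = (2/\pi)^{1/2} h^{(h+1)/2}/n^{n/2}$ (dropping the $1$), and since $h = n-1$, Lemma~\ref{lemma:ineq1} with $\alpha = 1$ bounds $h^{h}/n^{n} = (n-1)^{n-1}/n^n > (ne)^{-1}$, so $h^{(h+1)/2}/n^{n/2} = (h^h/n^n)^{1/2} h^{1/2} > (ne)^{-1/2}(n-1)^{1/2}$; combined with the factor $(2/\pi)^{1/2}$ this needs a little care to reach the clean constant $(2/(\pi e))^{1/2}$, which should come out by noting $(n-1)/n < 1$ works in the right direction — actually I would instead write $h^{(h+1)/2}/n^{n/2} \ge h \cdot (h^{h-1}/n^n)^{1/2}$ and apply Lemma~\ref{lemma:ineq1} with $\alpha = 1$ to $(h)^{h}/n^{n}$ directly, extracting $h^{-1/2}$, so the product is $\ge (ne)^{-1/2}$, giving $\Dbar(n) \ge (2/\pi)^{1/2}(ne)^{-1/2}\cdot n^{?}$ — the exponents must be balanced so that the $n$'s cancel, leaving exactly $(2/(\pi e))^{1/2}$. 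I expect the bookkeeping of powers of $h$ versus $n$ here to be the main technical obstacle, though it is only a one-line estimate once set up correctly.

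For $n \equiv 2 \pmod 4$: here the nearest Hadamard order is at distance $d = 2$ (namely $h = n-2$ or $h = n+2$), so we lose a factor of roughly $n^{1/2}$ compared to the previous cases. Take $h = n-2$, a multiple of~$4$, and build up from $D(h+1)$ to $D(h+2) = D(n)$: apply Lemma~\ref{lemma:sigma_upper} to get a large-determinant matrix of order $h+1$, then bound $D(n) = D((h+1)+1)$ in terms of $D(h+1)$ using the same border/excess trick (or Lemma~\ref{lemma:major} with a one-step extension), picking up another factor of order $n^{1/2}$ overall but losing a matching factor. Combining the two excess estimates and applying Lemma~\ref{lemma:ineq1} with $\alpha = 2$ to control $(n-2)^{n-2}/n^n > (ne)^{-2}$ should yield $\Dbar(n) \ge (8/(\pi e^2 n))^{1/2}$. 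As in the $n \equiv 1$ case, the only real work is tracking the powers of $h$ versus $n$ so that everything except the stated constant and the single factor $n^{-1/2}$ cancels; the structure of the argument is otherwise identical to that of Theorem~\ref{thm:unconditional1}, just with the excess refinement replacing the crude $2^{d-1}$ bound on the complementary minor.
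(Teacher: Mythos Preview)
Your overall plan matches the paper's proof: for $n\equiv 3\pmod 4$ use Lemma~\ref{lemma:minor} with $h=n+1$; for $n\equiv 1\pmod 4$ take $h=n-1$ and combine Lemmas~\ref{lemma:sigma_lower}--\ref{lemma:sigma_upper} with Lemma~\ref{lemma:ineq1} ($\alpha=1$); for $n\equiv 2\pmod 4$ take $h=n-2$, go up to order $h+1$ via the excess bound, then to $h+2$ by a single bordering step, and finish with Lemma~\ref{lemma:ineq1} ($\alpha=2$). Two points in your execution, however, would actually fail as written.

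First, in the case $n\equiv 1\pmod 4$ you propose to drop the additive~$1$ in $1+(2h/\pi)^{1/2}$. That gives
\[
\Dbar(n)\;\ge\;\Bigl(\tfrac{2}{\pi}\Bigr)^{1/2}\frac{h^{(h+1)/2}}{n^{n/2}}
\;=\;\Bigl(\tfrac{2}{\pi}\Bigr)^{1/2}\Bigl(\tfrac{n-1}{n}\Bigr)^{n/2},
\]
and $((n-1)/n)^{n/2}$ is \emph{strictly less} than $e^{-1/2}$ for every finite $n>1$ (it increases to $e^{-1/2}$), so this route never reaches $(2/(\pi e))^{1/2}$. Your remark that ``$(n-1)/n<1$ works in the right direction'' is backwards. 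The paper keeps both terms: from $(n-1)^{(n-1)/2}/n^{n/2}>(ne)^{-1/2}$ one gets
\[
\Dbar(n)\;>\;\Bigl(\tfrac{2}{\pi e}\Bigr)^{1/2}
\Bigl(\,(1-\tfrac{1}{n})^{1/2}+(\tfrac{\pi}{2n})^{1/2}\Bigr),
\]
and the bracket exceeds~$1$ because the second summand dominates the defect $1-(1-1/n)^{1/2}$. The same device (keep the~$1$, then check the bracket exceeds~$1$) is needed in the $n\equiv 2$ case.

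Second, in the case $n\equiv 2\pmod 4$ you speak of ``combining the two excess estimates'' and ``picking up another factor of order $n^{1/2}$'' at the second step. That is not available: Lemma~\ref{lemma:sigma_upper} requires $h\in\cal H$, and $h+1\equiv 1\pmod 4$ is not a Hadamard order. The step from order $h+1$ to $h+2$ is the plain bordering of Lemma~\ref{lemma:major}, gaining only a factor~$2$, so that $D(h+2)\ge 2h^{h/2}\bigl(1+(2h/\pi)^{1/2}\bigr)$; this factor~$2$ is exactly what produces the $8$ in $(8/(\pi e^2 n))^{1/2}$.
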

\begin{proof}
Since $\Dbar(1) = \Dbar(2) = 1$, the result holds for $n \in \{1,2\}$,
so we assume that $n \ge 3$.
Suppose that $4 \le h \equiv 0 \pmod 4$. We are assuming the Hadamard
conjecture, so $h \in {\cal H}$. Thus, combining
the inequalities of Lemma~\ref{lemma:sigma_lower} 
and Lemma~\ref{lemma:sigma_upper}, we have
\begin{equation} \label{eq:Dh1}
D(h+1) \ge h^{h/2}(1 + (2h/\pi)^{1/2})\,.
\end{equation}
Let $A$ be a $\{\pm1\}$-matrix of order $h+1$ with determinant
at least the right side of~\eqref{eq:Dh1}.
By the argument used in the proof of Lemma~\ref{lemma:major}, we can construct
a $\{\pm1\}$-matrix of order $h+2$ with determinant at least
$2h^{h/2}(1 + (2h/\pi)^{1/2})$ by adjoining a row and column
to $A$. % (it does not matter that $A$ is not a Hadamard matrix).
Thus
\begin{equation} \label{eq:Dh2}
D(h+2) \ge 2h^{h/2}(1 + (2h/\pi)^{1/2})\,.
\end{equation}

To prove the first inequality in~\eqref{eq:three_cases}, put
$h = n-1$ in~\eqref{eq:Dh1} and use Lemma~\ref{lemma:ineq1} with 
$\alpha = 1$. Thus, for $1 < n \equiv 1 \pmod 4$,
\[\Dbar(n) \ge 
 \left(\frac{2}{\pi e}\right)^{1/2}\left(
 \left(1-\frac{1}{n}\right)^{1/2}\!\! +\, 
  \left(\frac{\pi}{2n}\right)^{1/2}\right)
   > \left(\frac{2}{\pi e}\right)^{1/2}\!.\]

To prove the second inequality in~\eqref{eq:three_cases}, put
$h = n-2$ in~\eqref{eq:Dh2} and use Lemma~\ref{lemma:ineq1} with 
$\alpha = 2$. Thus, for $2 < n \equiv 2 \pmod 4$,
\[\Dbar(n) \ge \left(\frac{8}{\pi e^2 n}\right)^{1/2}\left(
 \left(1 - \frac{2}{n}\right)^{1/2} \!\!+\,
  \left(\frac{\pi}{2n}\right)^{1/2}\right)
   > \left(\frac{8}{\pi e^2 n}\right)^{1/2}\!.
\]
Finally, if $n\equiv 3 \pmod 4$, then a Hadamard matrix of order $n+1$ exists.
From Lemma~\ref{lemma:minor} with $h = n+1$ we have
$D(n) \ge (n+1)^{(n-1)/2}$. % (This improves on \cite{KMS00,LL}.)
\end{proof}
\begin{corollary} \label{cor:improved}
Assume the Hadamard conjecture. If $n \ge 1$ then
\[\Dbar(n) \ge 1/\sqrt{3n}\,.\]
\end{corollary}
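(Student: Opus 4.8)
The plan is to derive Corollary~\ref{cor:improved} directly from Theorem~\ref{thm:conditional1} by a case analysis on $n \bmod 4$, checking in each residue class that the bound already established dominates $1/\sqrt{3n}$. For $n \equiv 0 \pmod 4$ the Hadamard conjecture gives $n \in {\cal H}$, so $\Dbar(n) = 1 \ge 1/\sqrt{3n}$ trivially. For $n \equiv 1 \pmod 4$, Theorem~\ref{thm:conditional1} gives the constant bound $\Dbar(n) \ge (2/(\pi e))^{1/2}$; since $(2/(\pi e))^{1/2} > 1/\sqrt{3n}$ for all $n \ge 1$ (indeed the left side is a positive constant and the right side is at most $1/\sqrt{3}$), this case is immediate. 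The case $n \equiv 2 \pmod 4$ is the one where the two bounds are genuinely comparable in shape: Theorem~\ref{thm:conditional1} gives $\Dbar(n) \ge (8/(\pi e^2 n))^{1/2}$, and we need $(8/(\pi e^2 n))^{1/2} \ge (1/(3n))^{1/2}$, i.e.\ $8/(\pi e^2) \ge 1/3$, i.e.\ $24 \ge \pi e^2 \approx 23.2$. This holds, so the $n \equiv 2$ case goes through, though with little room to spare.

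The remaining and most delicate case is $n \equiv 3 \pmod 4$. Here Theorem~\ref{thm:conditional1} gives $\Dbar(n) \ge (n+1)^{(n-1)/2}/n^{n/2}$, and the claimed asymptotics $\sim (e/n)^{1/2}$ suggest this beats $1/\sqrt{3n}$ for large $n$, but we need it for all $n \equiv 3 \pmod 4$, including $n = 3, 7, 11, \dots$. So I would show $(n+1)^{(n-1)/2}/n^{n/2} \ge 1/\sqrt{3n}$, equivalently $(n+1)^{n-1} \cdot 3n \ge n^{n}$, equivalently $3n(1 + 1/n)^{n-1} \ge n^2$, i.e.\ $3(1+1/n)^{n-1} \ge n$. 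This fails for large $n$! So that naive reduction is wrong — instead the intended route must be to invoke Lemma~\ref{lemma:ineq1}. Applying Lemma~\ref{lemma:ineq1} with the substitution giving $(n+1)^{n+1}/n^{n} > (n e)$, one gets $(n+1)^{(n-1)/2}/n^{n/2} = \bigl((n+1)^{n+1}/n^n\bigr)^{1/2} (n+1)^{-1} > (ne)^{1/2}/(n+1)$, and then one checks $(ne)^{1/2}/(n+1) \ge 1/\sqrt{3n}$, i.e.\ $3 n^2 e \ge (n+1)^2$, i.e.\ $(n+1)/n \le \sqrt{3e}$. Since $\sqrt{3e} > 2$ and $(n+1)/n \le 2$ for $n \ge 1$, this holds for all $n \ge 1$; in fact it holds with a comfortable margin. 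For the few small values $n \le 3$ not covered by the "$n \ge 3$" hypotheses inside Theorem~\ref{thm:conditional1}, namely $n = 1, 2, 3$, I would check directly: $\Dbar(1) = \Dbar(2) = 1 \ge 1/\sqrt{3}, 1/\sqrt{6}$, and for $n = 3$, $D(3) = 4$ so $\Dbar(3) = 4/3^{3/2} = 4/(3\sqrt 3) \approx 0.77 \ge 1/3$.

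The main obstacle is getting the $n \equiv 3 \pmod 4$ case organized correctly: one must resist the temptation to compare $(n+1)^{(n-1)/2}/n^{n/2}$ with $1/\sqrt{3n}$ directly (which fails asymptotically in the wrong direction under a sloppy manipulation) and instead route the estimate through Lemma~\ref{lemma:ineq1}, which converts the awkward ratio into the clean lower bound $(ne)^{1/2}/(n+1)$. Once that is in hand, every residue class yields a bound that is at least $1/\sqrt{3n}$, with the $n \equiv 2$ class being the tightest constraint (forcing the constant $3$ rather than something smaller). Assembling the four cases and the handful of small-$n$ checks completes the proof.
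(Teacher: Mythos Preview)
Your approach matches the paper's: case analysis on $n \bmod 4$, with the numerical fact $\pi e^2 < 24$ (i.e.\ $8/(\pi e^2) > 1/3$) carrying the tightest case $n \equiv 2$, and $n \in \{1,2\}$ checked by hand. The paper's own proof is a two-line sketch that leaves the other cases implicit.

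Two slips in your writeup, though neither kills the argument. First, in the $n\equiv 1$ case your parenthetical is false: $(2/(\pi e))^{1/2}\approx 0.484$ is \emph{less} than $1/\sqrt{3}\approx 0.577$, so the inequality $(2/(\pi e))^{1/2} > 1/\sqrt{3n}$ fails at $n=1$. You recover by checking $n=1$ directly, but the stated reason is wrong; the correct version is that for $n\equiv 1\pmod 4$, $n\ge 5$ we have $1/\sqrt{3n}\le 1/\sqrt{15}<0.484$. Second, your ``naive reduction'' in the $n\equiv 3$ case contains an algebra error: squaring $(n+1)^{(n-1)/2}/n^{n/2}\ge 1/\sqrt{3n}$ gives $3n(n+1)^{n-1}\ge n^n$, and writing $(n+1)^{n-1}=n^{n-1}(1+1/n)^{n-1}$ reduces this to $3(1+1/n)^{n-1}\ge 1$, not $\ge n$. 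That inequality is trivially true for all $n\ge 1$, so the direct route works fine and the detour through Lemma~\ref{lemma:ineq1} is unnecessary.
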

\begin{proof}
For $n>2$ this follows from Theorem~\ref{thm:conditional1},
since $\pi e^2 < 24$
(in fact we could replace the constant $3$ in the statement of the
Corollary by  $\pi e^2/8 \approx 2.9017$).
The result is also true if $n \in \{1,2\}$, as then $\Dbar(n) = 1$.
\end{proof}
\begin{remark}
{\rm
The inequality~\eqref{eq:Dh1} is within a factor $\sqrt{\pi}$ of the
Barba upper bound $(2h+1)^{1/2}h^{h/2}$.
}
\end{remark}
\begin{remark}
{\rm
If $n \equiv 2 \pmod 8$, we get a lower bound
$\Dbar(n) \ge 2/(\pi e)$ by using the Sylvester
construction on a matrix of order $n/2 \equiv 1 \pmod 4$.
Thus, the remaining cases in which there is a ratio of order $n^{1/2}$
between the upper and lower bounds are
$(n \bmod 8) \in \{3,6,7\}$.
}
\end{remark}

\section{Comparison with earlier results}	\label{sec:comparison}

Since different authors use different notations, it is not always easy to
compare their lower bounds. To assist the reader in this, we briefly compare
our results with the earlier lower-bound results of Cohn~\cite{Cohn63},
Clements and Lindstr\"om~\cite{CL65}, Koukouvinos, Mitrouli and
Seberry~\cite{KMS00}, and de Launey and Levin~\cite{LL}.

Cohn~\cite[Theorem 13]{Cohn63} shows that, for any given positive
$\varepsilon$ and all sufficiently large $n$,
$D(n) \ge n^{(1/2-\varepsilon)n}$. %Taking logarithms, 
This inequality is
equivalent to \[n\ln n - 2\ln D(n) \le 2\varepsilon n\ln n\,.\] 
Thus, we can express 
Cohn's result as
$\ln D(n) \sim \frac12 n\ln n$, or equivalently
\[n\ln n - 2\ln D(n) = o(n\ln n) \;\text{ as }\; n \to \infty\,.\]

Clements and Lindstr\"om~\cite[Corollary to Theorem 2]{CL65}
improved Cohn's result by showing that the $o(n\ln n)$ term could be
replaced by $O(n)$. More precisely, they obtained the bound
\[n\ln n - 2\ln D(n) \le n\ln(4/3)\,.\]

Our Corollary~\ref{cor:asymptotics} improves (at least asymptotically)
on the results of Cohn, Clements and
Lindstr\"om by showing that
\[n\ln n - 2\ln D(n) = O(n^{21/40}\ln n) \;\text{ as }\; n \to \infty\,.\]
The exponent $21/40$ here arises from a bound~\cite{BHP} on prime gaps.

Koukouvinos, Mitrouli and Seberry~\cite[Theorem~2]{KMS00} assume that
$4t = v+1$ is the order of a Hadamard matrix, and consider
orders $v$, $v-1$ and $v-2$ separately. 
They obtain lower bounds of $(4t)^{2t-1}$,
$2(4t)^{2t-2}$, and $4(4t)^{2t-3}$ respectively in these cases.

On the assumption that both $4t-4$ and $4t$
are orders of Hadamard matrices, the comparison with our
Theorem~\ref{thm:conditional1} is summarized in {\Tabcompare}.
The asymptotics all 
follow from the fact that $\lim_{n\to\infty}(1+c/n)^n = \exp(c)$.
For example, the case $n \equiv 1 \bmod 4$ corresponds to taking
minors of order $n = v-2 = 4t-3$, 
and the lower bound of Koukouvinos \emph{et al} is
\[4(4t)^{2t-3} = 4(n+3)^{(n-3)/2} 
 \sim 4e^{3/2}n^{(n-3)/2} \;\text{ as }\; n \to \infty\,.\]
From {\Tabcompare}
we see that the bounds are the same in the case $n \equiv 3
\bmod 4$, but our bounds are sharper (for sufficiently large $n$)
in the other two cases.  More precisely, our Theorem~\ref{thm:conditional1}
gives sharper bounds than Theorem~2 of~\cite{KMS00}
if $n \ge 9$ in the case $n \equiv 1 \bmod 4$,
and if $n \ge 82$ in the case $n \equiv 2 \bmod 4$.

\begin{table}[h]        % Table \Tabcompare\ hardcoded
\begin{center}          % as automatic numbering doesn't work 
\begin{tabular}{|c|c|c|c|}
\hline
$n$ & $n \bmod 4$ & Koukouvinos \emph{et al}~\cite{KMS00} &
	Our Theorem~\ref{thm:conditional1} \\
\hline
& & & \\[-8pt]
$v-2$ & $1$	& $4(e/n)^{3/2} \approx 17.93/n^{3/2}$ &
	$(2/(\pi e))^{1/2} \approx 0.4839$ \\
& & & \\[-8pt]
$v-1$ & $2$ 	& $\;\;2e/n \approx 5.437/n$
	& $(8/(\pi e^2 n))^{1/2} \approx 0.5871/n^{1/2}$\\
& & & \\[-8pt]
$v$ & $3$	& $(e/n)^{1/2} \approx 1.649/n^{1/2}$
	& $(e/n)^{1/2} \approx 1.649/n^{1/2}$\\
\hline
\end{tabular}
\caption{Asymptotics of some lower bounds on $\Dbar(n)$}
\end{center}
\end{table}

We give two examples.
First consider $n = 13 \equiv 1 \bmod 4$.
Theorem~2 of
\cite{KMS00} (with $t=4$, $v=15$, $n=v-2$)
gives $D(13) \ge 4(4t)^{2t-3} = 4194304$,
so $\Dbar(13) \ge 0.2410$.  Our Theorem~\ref{thm:conditional1}
gives the sharper bound $\Dbar(13) \ge 0.4839$. The 
maximal determinant is
known from~\cite{Raghavarao} to be
$D(13) = 14929920$, so $\Dbar(13) \approx 0.8579$.

As a second example, consider $n = 94 \equiv 2 \bmod 4$.
Theorem~2 of
\cite{KMS00} (with $t=24$, $v=95$, $n=v-1$) gives
a lower bound $D(94) \ge 2\cdot 96^{46}$,
so $\Dbar(94) \ge 0.0560$, whereas
our Theorem~\ref{thm:conditional1} gives
$\Dbar(94) \ge 0.0605$. This bound can be improved
by a construction due to Rokicki, see \cite{Orrick-www},
but the exact value of $\Dbar(94)$ is unknown.

Our Theorem~\ref{thm:unconditional1} is more general than Theorem~2 of
Koukouvinos \emph{et~al}, as Theorem~\ref{thm:unconditional1} 
covers the cases $n\le v-3$ which occur
if the Hadamard conjecture is false and a Hadamard matrix of order $4t-4$
does not exist.
If $n = v$ or $v-1$ then Theorem~\ref{thm:unconditional1} gives
bounds of the same order
of magnitude as those of Koukouvinos \emph{et~al}
(of order $n^{-1/2}$ and $n^{-1}$ respectively),
which is to be expected as in the first half of the proof of
Theorem~\ref{thm:unconditional1} (and Lemma~\ref{lemma:minor})
we use a similar argument involving minors of a Hadamard matrix
of order $v+1$. Our bounds are slightly weaker as 
the constant $4/e$ in the inequality~\eqref{eq:uncondbd}
is not optimal in all cases.

As explained in Remark~\ref{remark:LL} of \S\ref{sec:bounds}, 
our Theorem~\ref{thm:unconditional1}
improves on Theorem~3 of de Launey and Levin~\cite{LL}
by almost halving the worst-case exponent of~$n$.

\section{Conclusion and remarks} 

Recall that $\delta(n) = |h-n|$ is the distance from a given order $n$
to the closest order $h$ of a Hadamard matrix.  We have shown
that $\Dbar(n) \ge n^{-\delta(n)/2}$ (see Theorem~\ref{thm:unconditional1}). 
On the assumption of the Hadamard
conjecture, % which implies that $\delta(n) \le 2$, 
this can be improved to
$\Dbar(n) \ge (3n)^{-1/2}$ (see Corollary~\ref{cor:improved}).

In view of the numerical result $\Dbar(n) \ge 1/2$ that holds for $n \le 120$
(see~\cite{Orrick-www}),
our bounds seem far from the best possible.
The best prospect of improving them may be to apply 
the probabilistic method, as was done in the case $n = h+1$
by Brown and Spencer~\cite{BS71}
and (independently) by Best~\cite{Best}.
For preliminary results in this direction, see the work in progress
at~\cite{rpb253}.

\subsubsection*{Acknowledgements}

We thank Will Orrick for his assistance in locating some of the
references, and Warren Smith for pointing out the connection 
between Jacobi's identity and Sz\"oll\H{o}si's theorem.
We also thank the referee, whose comments helped significantly to
clarify the exposition.

\pagebreak[4]

\section{Appendix: Proof of Sz\"oll\H{o}si's Lemma 5.7}

Here we give a short proof of Lemma 5.7 of Sz\"oll\H{o}si~\cite{Szollosi10},
using Jacobi's determinant identity~\cite{BS83,Gantmacher,Jacobi}.

\begin{lemma}[Sz\"oll\H{o}si]	\label{lemma:Sz}
Given any unitary matrix 
\[U = \left[\begin{matrix} A & B\\ C & D\\ \end{matrix}\right] \]
with blocks $A$, $B$, $C$, $D$, where $A$ and $D$ are square matrices
not necessarily of the same size, then we have
$|\det(A)| = |\det(D)|$.
\end{lemma}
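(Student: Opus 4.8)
The plan is to use Jacobi's determinant identity (the identity relating a minor of a matrix to the complementary minor of its inverse, scaled by the determinant). Recall that Jacobi's identity states: if $M$ is an invertible $n\times n$ matrix and we partition the index set into a block of size $k$ and its complement of size $n-k$, then the $k\times k$ minor of $M^{-1}$ taken from a chosen set of rows $R$ and columns $S$ equals $(\det M)^{-1}$ times the signed complementary $(n-k)\times(n-k)$ minor of $M$ (taken from the complementary rows and columns), up to a sign $\pm 1$ depending only on $R$ and $S$. The key observation is that for a unitary matrix $U$, we have $U^{-1} = U^{*}$, so the entries of $U^{-1}$ are simply conjugates of entries of $U$ (transposed), and hence any minor of $U^{-1}$ has the same absolute value as the corresponding minor of $U$.

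Concretely, first I would write $U = \left[\begin{smallmatrix} A & B\\ C & D \end{smallmatrix}\right]$ with $A$ of size $k\times k$ and $D$ of size $(n-k)\times(n-k)$, and note $|\det U| = 1$ since $U$ is unitary. Apply Jacobi's identity to the top-left $k\times k$ block of $U^{-1}$: this block, call it $(U^{-1})_{11}$, satisfies $\det\left((U^{-1})_{11}\right) = \pm (\det U)^{-1}\det(D)$, because the complementary minor (rows and columns not in the first $k$) of $U$ is exactly $\det(D)$, and here the sign is $+1$ since $R = S = \{1,\dots,k\}$ are the same contiguous set. Taking absolute values and using $|\det U| = 1$ gives $\bigl|\det\left((U^{-1})_{11}\right)\bigr| = |\det(D)|$.

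Next I would use unitarity: $U^{-1} = U^{*} = \overline{U}^{\,T}$, so the top-left $k\times k$ block of $U^{-1}$ is $\overline{A}^{\,T} = A^{*}$. Hence $\det\left((U^{-1})_{11}\right) = \det(A^{*}) = \overline{\det(A)}$, so $\bigl|\det\left((U^{-1})_{11}\right)\bigr| = |\det(A)|$. Combining the two displayed equalities yields $|\det(A)| = |\det(D)|$, as desired.

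I do not anticipate a serious obstacle here; the only point requiring a little care is getting the sign in Jacobi's identity right, but since the chosen row set and column set coincide (both are the first $k$ indices), the sign factor $(-1)^{\sum_{i\in R} i + \sum_{j\in S} j}$ is $(-1)^{2(1+\cdots+k)} = +1$, and in any case the sign is irrelevant once we pass to absolute values. One should also state the hypothesis that ensures $U$ is invertible — but this is automatic, as every unitary matrix has $|\det U| = 1 \neq 0$. A remark worth including is that the argument actually shows $\overline{\det(A)}$ and $\det(D)$ differ by a unimodular factor $(\det U)^{-1}$, so in fact $\det(D) = \det(U)\,\overline{\det(A)}$ exactly; the absolute-value statement is the clean consequence.
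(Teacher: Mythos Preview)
Your proof is correct and is essentially the same as the paper's: both apply Jacobi's determinant identity together with $U^{-1}=U^{*}$ to relate $\det(A)$ and $\det(D)$, then take absolute values using $|\det U|=1$. The only cosmetic difference is that the paper states Jacobi as $\det(A)=\det(U)\det(D^{*})$ directly, whereas you compute $\det\bigl((U^{-1})_{11}\bigr)=\det(A^{*})$ first; your final remark $\det(D)=\det(U)\,\overline{\det(A)}$ is exactly equivalent.
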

\begin{proof} %[Proof via Jacobi's identity]
Since $U$ is unitary, we have
\[
U^{-1} = U^{*} = \left[\begin{matrix} A^{*} & C^{*}\\ 
 B^{*} & D^{*}\\ \end{matrix}\right]\,,
\]
where ``$*$'' denotes the complex conjugate transpose.
Thus, from Jacobi's identity,
\[\det(A) = \det(U)\det(D^{*})\,.\]
Taking absolute values and using $|\det(U)| = 1$, we obtain
\[|\det(A)| = |\det(D^{*})| = |\det(D)|\,.\]
\end{proof}

\begin{remark}	\label{remark:Sz}
{\rm
If $H$ is a Hadamard matrix of order $h$, we can apply
Lemma~\ref{lemma:Sz} to
$U := h^{-1/2}H$ which is a unitary matrix.  Thus, if
$H$ is written in block form as
\[H = \left[\begin{matrix} A & B\\ C & D\\ \end{matrix}\right]\,, \]
where $A$ is $n\times n$ and $D$ is $d\times d$, we have
\[|\det(h^{-1/2}A)| = |\det(h^{-1/2}D)|\,.\] 
Equivalently, since $h=n+d$,
\[|\det(A)| = % h^{(n-d)/2}|\det(D)| = 
  h^{h/2-d}|\det(D)|\,,\]
which is the result that we use in \S\ref{sec:bounds}.
}
\end{remark}

\pagebreak[4]

\end{document}